\newtheorem{theorem}{Theorem}[section]
\newtheorem{corollary}[theorem]{Corollary}
\theoremstyle{definition}
\newtheorem{definition}[theorem]{Definition}
\newtheorem{example}[theorem]{Example}
\newtheorem{remark}{\it{\textbf{Remark}}}
\newcommand{\R}{\mathbb R}
\newcommand{\al}{\alpha}\newcommand{\si}{\sigma}
\newcommand{\Gdet}{\mathrm{Gdet}}
\newcommand{\sign}{\mathrm{sign}}
\def\mat#1#2{\mbox{$\left(\begin{array}{#1}#2\end{array}\right)$}}
\numberwithin{equation}{section}
\begin{document}	
	\title{A Generalized Determinant of Matrices and Applications}
	\date{}

\author[X.S. Lu]{Xuesong Lu}

\author[S.T. Mao]{Songtao Mao}

\author[Z.X. Wang]{Zixing Wang}

\author[Y.H. Zhang]{Yuehui Zhang$^*$}
\address{School of Mathematical Sciences, Shanghai Jiao Tong University,
800 Dongchuan Road, 200240 Shanghai, China}
\email{leocedar@qq.com, \  jarlly678@gmail.com,\  nbwzx@126.com, \ zyh@sjtu.edu.cn}

\thanks{Supported by NSFC under Grant No. 11771280 and 11671258, by NSF of Shanghai Municipal under Grant No. 17ZR1415400\\
\indent *\ Corresponding author}

\subjclass[2010]{Primary 15A15.}

\date{}

\dedicatory{}

\keywords{generalized, determinant, matrices}

\begin{abstract}

    A generalized definition of the determinant of matrices is given, which is compatible with the usual determinant for square matrices and keeps
    many important properties, such as being an alternating multilinear function, keeping multiplication formula and partly keeping the
    Cauchy-Binet’s formula. As applications of the new theory,  the generalized Cramer's rule and the generalized oriented volume are obtained. 

\end{abstract}

\maketitle

    \section{Introduction}

    The determinant is such a useful tool containing many nice properties but a defect that it is only available to square matrices. The exploration of the definition for non-square matrices can date back to 1925 in C.E. Cullis' book (\cite{C}), where the new term `{\it determinoid}' was created for non-square matrices. Lacking geometric meanings, Cullis' definition was overlooked for more than 40 years until Radi\'{c} found a new one (\cite{R1}) in 1966. Unfortunately, Radi\'{c}'s new definition was again overlooked for about 15 years until 1980, when in Joshi's paper (\cite{O}) a reasonable definition was given. Aiming to study the Penrose inverse of rectangular matrices, Joshi's determinant kept the cofactor expansion formula of square matrices and henceforth kept the determinant criterion of invertible matrices, but it failed to keep many other important properties of determinants. In 2001, Radi\'{c} himself gave a geometric application (\cite{R2}) of his definition for $2\times n$ matrices and A.Makarewicz, P. Pikuta and D.Szalkowski extended it to higher dimensions (\cite{MPS}). Other work in this topic including Amiri, Fathy, Bayat's definition (\cite{AFB}) in 2010. But all the definitions above have a common defect: they don't contain a meaning of volume, which is a great meaning of the usual determinant. Yanai, Takane and Ishii's definition (\cite{YTI}) in 2006 almost did it, but for lack of a sign system, their definition is not compatible with the usual definition for square matrices and hence can't express the oriented volume. Hence a sign system is necessary. After summing up all the advantages and disadvantages, we finally find a fresh definition equipping with a sign system that is compatible with the usual definition as well as keeps many properties such as being an alternating multilinear function, keeping multiplication formula and partly keeping the Cauchy-Binet’s formula. Furthermore, based on the sign system, we obtain the generalized Cramer's rule and the generalized oriented volume of a parallel polyhedron.

    Throughout, all matrices involved are over the field $\R$ of real numbers. $\R^{m\times n}$ stands for the vector spaces over $\R$ consisting of all $m\times n$ real matrices, $I_n$ the $n \times n$ identity matrix, $e_i$ the $i$-th column of $I_m$ for all $1\le i\le m$. We denote the symmetric group on $\underline{m}=\{1, 2, \cdots, m\}$ by $S_m$, the identity element of $S_m$ is denoted by $(1)$.
    
\section{A sign system of the determinant of matrices}
As stated in the introduction, a sign system is necessary for a determinant of matrices, hence we define the sign as the start step.

First, we define an action of the symmetric group $S_{m}$ on the vector space $\mathbb{R}^{m\times n}(m\ge n)$. It is well known that there is an isomorphism between symmetric group $S_m$ and all $m\times m$ permutation matrices $P_m$ as follows. For any $\sigma\in S_m$,
\begin{equation}
\begin{aligned}
    \varPhi:  S_m &\rightarrow  P_m\\ 
    \sigma &\mapsto \mat{c}{e^T_{\si(1)}\\ e^T_{\si(2)}\\ \vdots\\ e^T_{\si(m)}}.
    \end{aligned}
\end{equation}

For any $\sigma\in S_m, A\in\R^{m\times n}$, define
\begin{equation}\label{action}
\si(A):= \varPhi(\sigma) A=\mat{c}{e^T_{\si(1)}\\ e^T_{\si(2)}\\ \vdots\\ e^T_{\si(m)}}A=\mat{c}{e^T_{\si(1)}A\\ e^T_{\si(2)}A\\ \vdots\\ e^T_{\si(m)}A}.
\end{equation}
This is clearly a group action. In particular, we denote by $P_\sigma$ the action of $\sigma$ on the $m\times n(m\ge n)$ `Identity matrix' $I_{m\times n}=(e_1, e_2, \cdots, e_n)$, namely
\begin{equation}\label{actionI}
P_\sigma=\si(I)=\varPhi(\sigma) I_{m\times n}=\mat{c}{e^T_{\si(1)}\\ e^T_{\si(2)}\\ \vdots\\ e^T_{\si(m)}} .
\end{equation}

We associate for each $m\times n(m\ge n)$ matrix $A$ and each $\si\in S_m$ with the following square matrix in $\R^{n\times n}$ defined by
\begin{equation}\label{associate}
A^\si=P_\sigma^T A.
\end{equation}

 Note that if $A$ is of full column rank, there exists $\si\in S_m$ such that $A^\si$ is non-singular.  We now use the formula (\ref{associate})  to define the sign of our determinant for $m\times n(m\ge n)$ matrices. To this end, we at first define a total order on the symmetric group $S_{m}$. $\forall \si\not= \tau\in S_m$, define
\begin{equation}\label{order}
   \si>\tau \iff \exists\ t\in\underline{m}\ such\ that\ \si(t)>\tau(t), \si(j)=\tau(j), \forall j>t. 
\end{equation}

The largest element of $S_m$ under this order is the identity $(1)$ and the least one is $\si: i\mapsto m-i+1$. For each $A\in\mathbb{R}^{m\times n}$,  define
\begin{equation}\label{max}
\sigma_A=\begin{cases}
\ \ (1),\quad  \text{ if $A$ is not of full column rank},\\
\max\{\sigma\ |\ \det(A^\si)\ne 0,\ \sigma\in S_m\},\quad \text{otherwise}.\end{cases} 
\end{equation}

The key of the sign to be defined is the concept {\it principal submatrix} of a matrix $A\in\R^{m\times n}$, by which we mean the following square matrix:
\begin{equation}\label{principal}
A_P=A^{\si_A}=P_{\si_A}^TA\in\R^{n\times n}.
\end{equation}

\begin{definition}\label{2d1-sign}   Let $m\ge n$. Let $A\in\mathbb{R}^{m\times n}$. The sign of $A$, $\sign(A)$, is defined to be the sign of the determinant of its principal submatrix, that is
 $$\sign(A)=
    \begin{cases}
 \  \ 1,& \text{if}\ \ \det(A_P)>0,\\
 \ \ 0, & \text{if}\ \ \det(A_P)=0,\\
    -1,& \text{if}\ \ \det(A_P)<0.    \end{cases}
  $$

\end{definition}

\begin{remark}
The sign of $A$ is actually the same as that of the first $n\times n$ submatrix whose determinant is nonzero, where the ordering of $n\times n$ submatrices $A_{i_{1},i_{2},\ldots, i_{n}}$ is in lexicographic order of $\left(i_{1},i_{2},\ldots, i_{n}\right)$. In particular, if $\det(A_{1,2,\ldots,n})\ne0$, then $\sign(A)=\sign(A_{1,2,\ldots,n})$.
\end{remark}

\begin{remark}
If $m<n$, $A\in \R^{m\times n}$, we define $\sign(A)\equiv 0$. The rationality will be clarified in the next section.

Based on this sign system, we give a fresh definition of the determinant for matrices in the following.
\end{remark}

\begin{theorem}\label{signlemma} 
Let $m\ge n, A\in\R^{m\times n}$. Then
\begin{itemize} 
    \item[(1)] If $m=n$, then $\sign(A)=\sign(\det(A)) .$
    \item[(2)] If $A$ is a rectangular identity matrix, then $\sign(A)=1$.
    \item[(3)] If $B\in \R^{n\times n}$, then $\sign(A B)=\sign(A) \sign(B) .$
    \item[(4)] Let $A=(\al_1,\cdots, \al_n)$ , $B=(\al_1, \cdots,\al_{i-1},\al_i+k \al_j,\al_{i+1},\cdots,\al_n)$, then   $\sign(B)=\sign(A)$.
    \item[(5)] Let $k\in \R$, $A=(\al_1,\cdots, \al_n)$, $B=(\al_1,\cdots,k \al_i,\cdots,\al_n)$, then $\sign (B)=\sign(k)\sign(A)$.
  
\end{itemize}
\begin{proof}
(1)The matrix $A_{i_{1}\ldots i_{n}}$ in the right hand side of Definition \ref{Gdet} is indeed $A$ itself, so $\sign(A)=\sign(\det(A)) .$

(2)The matrix $A_{i_{1}\ldots i_{n}}$ in the right hand side of Definition \ref{Gdet} is an $n \times n$ identity matrix , so $\sign(A)=\sign(\det(A)) .$
       
(3)If $A$ or $B$ is not of full column rank, then $AB$  is not of full column rank. Hence the determinant of every submatrix of $AB$ is 0, i.e.  $\sign(AB)=0=\sign(A)\sign(B)$. 
Thus assume that both $A$ and $B$ are of full column rank. Since $B$ is invertible, $\sign(B)\ne0$. By the formula (\ref{principal}) and Definition \ref{2d1-sign}, the sign of $A$ is the same as that of its principle submatrix $A_P$: $\sign(A)=\sign(A_P)$. Note that the principal submatrix of $AB$ is $A_PB$, since $B$ is invertible, so $\sign(AB)=\sign(A_PB)$. It is well-known that $\sign(XY)=\sign(X)\sign(Y)$ for square matrices $X$ and $Y$, hence $\sign(A_PB)=\sign(A_P)\cdot \sign (B)$. Therefore, $\sign(AB)=\sign(A)\sign(B)$, as required.

(4)According to the basic property of the determinant of square matrices, $\forall 1\leq i_{1}<\ldots<i_{n}\leq m$, $\det(B_{i_{1},\ldots, i_{n}})=\det(A_{i_{1},\ldots, i_{n}})$. Thus $\sign(B)=\sign(A)$.

(5)According to the basic property of the determinant of square matrices, $\forall 1\leq i_{1}<\ldots<i_{n}\leq m$, $\det(B_{i_{1},\ldots, i_{n}})=k\det(A_{i_{1},\ldots, i_{n}})$. Thus $\sign (B)=\sign(k)\sign(A)$.

\end{proof}
\end{theorem}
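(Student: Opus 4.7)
The plan is to reduce each assertion to a statement about determinants of genuine $n\times n$ matrices via the definition $A_P = P_{\sigma_A}^T A$ of the principal submatrix. Parts (1) and (2) fall out by computing $A^{(1)}$: since $P_{(1)} = I_{m\times n}$, the matrix $A^{(1)} = P_{(1)}^T A$ is the top $n\times n$ block of $A$. For (1) with $m=n$ this gives $A^{(1)} = A$, and because the identity $(1)$ is the largest element of $S_m$ under the order (\ref{order}) (take $t$ to be the largest position at which a non-identity $\tau$ disagrees with the identity), one has $\sigma_A = (1)$ whether $A$ is invertible (the full-rank branch of (\ref{max}) selects $(1)$) or singular (the other branch does so by fiat). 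Hence $A_P = A$ and $\sign(A) = \sign(\det A)$. For (2) with $A = I_{m\times n}$ the same computation yields $A^{(1)} = I_n$, so $\sigma_A = (1)$, $A_P = I_n$, and $\sign(A) = 1$.

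The heart of the theorem is (3), which I would pin on the identity $(AB)^\sigma = P_\sigma^T(AB) = (P_\sigma^T A)B = A^\sigma B$, valid for every $\sigma\in S_m$. If either $A$ is not of full column rank or $B$ is singular, then $AB$ is not of full column rank and $\sign(AB) = 0$, while $\sign(A)\sign(B) = 0$ as well (using (1) to conclude $\sign(B) = 0$ in the singular-$B$ case). Otherwise $A$ has full column rank and $B$ is invertible, so multiplicativity of the ordinary determinant gives $\det((AB)^\sigma) = \det(A^\sigma)\det(B)$ for every $\sigma$. The sets $\{\sigma : \det(A^\sigma)\ne 0\}$ and $\{\sigma : \det((AB)^\sigma)\ne 0\}$ therefore coincide, so they have the same maximum $\sigma_{AB} = \sigma_A$, giving $(AB)_P = A^{\sigma_A}B = A_P B$. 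Taking signs yields $\sign(AB) = \sign(\det(A_P B)) = \sign(\det A_P)\sign(\det B) = \sign(A)\sign(B)$.

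Parts (4) and (5) are then corollaries of (3) and (1) via right multiplication by an $n\times n$ elementary matrix. In (4), assuming as usual $i\ne j$, one has $B = A(I_n + k e_j e_i^T)$; the factor is unit triangular with determinant $1$, so by (1) its sign is $1$, and (3) delivers $\sign(B) = \sign(A)$. In (5), $B = AD$ with $D$ the diagonal matrix whose $(i,i)$-entry is $k$ and whose other diagonal entries are $1$, so (1) gives $\sign(D) = \sign(k)$ and (3) yields $\sign(B) = \sign(k)\sign(A)$. The only step I expect to need genuine care is the reduction $(AB)_P = A_P B$ in (3): it depends on the observation that right multiplication by an invertible $n\times n$ matrix preserves both the set of $\sigma$'s for which $A^\sigma$ is invertible and, as a consequence, the maximum such $\sigma$ under the order (\ref{order}). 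The identity $(AB)^\sigma = A^\sigma B$ makes both facts instantaneous, so the remainder of the argument is essentially bookkeeping.
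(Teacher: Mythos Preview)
Your proposal is correct. For parts (1)--(3) it matches the paper's argument essentially line for line; indeed, you supply the justification the paper elides when it asserts that the principal submatrix of $AB$ is $A_PB$, namely the identity $(AB)^\sigma = A^\sigma B$ together with the observation that invertible $B$ preserves the set $\{\sigma:\det(A^\sigma)\neq 0\}$ and hence its maximum.

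For (4) and (5) you take a genuinely different route. The paper argues directly at the submatrix level: since every $n\times n$ submatrix of $B$ differs from the corresponding submatrix of $A$ by a column operation (respectively a column scaling), the first nonzero minor in lexicographic order occurs at the same row selection and with the same sign (respectively with sign multiplied by $\sign(k)$). You instead factor $B = AE$ with $E$ an elementary $n\times n$ matrix and invoke (3) and (1). Your route is cleaner once (3) is in hand and avoids re-examining submatrices; the paper's route is self-contained and does not rely on (3). Either is perfectly sound.
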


\begin{corollary}\label{QR} 
Let $A\in \R^{m\times n}$ be of full column rank, $A=QR$ the $QR$ decomposition of $A$, where $Q\in \R^{m\times n}$, $Q^TQ=I_{n\times n}$, $R\in \R^{n\times n}$ is an upper triangular matrix with positive diagonal elements. Then $\sign(A)=\sign(Q)$.
\begin{proof}
By Theorem \ref{signlemma}(3), $\sign(A)=\sign(Q)\sign(R)$. Since the diagonal elements of $R$ are all positive numbers,  $\sign(R)=1$. Therefore, $\sign(A)=\sign(Q)$.
\end{proof}
\end{corollary}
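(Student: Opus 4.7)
The plan is to reduce the corollary directly to two already-available facts: the multiplicativity statement in Theorem \ref{signlemma}(3) and the compatibility of $\sign$ with the usual determinant for square matrices stated in Theorem \ref{signlemma}(1). Since $A=QR$ with $Q\in\R^{m\times n}$ and $R\in\R^{n\times n}$, the hypotheses of Theorem \ref{signlemma}(3) are exactly met (the second factor $R$ is square of size $n\times n$), so one obtains at once
\[
\sign(A)=\sign(QR)=\sign(Q)\sign(R).
\]

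Next I would evaluate $\sign(R)$. Because $R$ is square, Theorem \ref{signlemma}(1) gives $\sign(R)=\sign(\det(R))$. As $R$ is upper triangular, $\det(R)$ equals the product of its diagonal entries, and by hypothesis these are all strictly positive; hence $\det(R)>0$ and therefore $\sign(R)=1$.

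Plugging this into the previous display yields $\sign(A)=\sign(Q)$, which is exactly what is to be proved. There is no real obstacle here: the only thing to verify is that both hypotheses of Theorem \ref{signlemma}(3) are cleanly satisfied (in particular, that the factor on the right is an honest $n\times n$ square matrix, so that the multiplicativity theorem applies), and this is built into the definition of the $QR$ decomposition being used.
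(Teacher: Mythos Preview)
Your argument is correct and is essentially identical to the paper's own proof: both apply Theorem~\ref{signlemma}(3) to get $\sign(A)=\sign(Q)\sign(R)$ and then observe that $\sign(R)=1$ because $R$ is upper triangular with positive diagonal. Your explicit invocation of Theorem~\ref{signlemma}(1) to justify $\sign(R)=\sign(\det R)$ is a small added detail the paper leaves implicit.
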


\section{The Definition and Properties of $\Gdet$}

\begin{definition}\label{Gdet}  
Let $A\in\mathbb{R}^{m\times n}$. The generalized determinant of $A$, $\Gdet(A)$, is defined as
    $$    \Gdet(A):=\begin{cases}
       \sign\left(A\right)\left(\sum\limits_{1\leq i_{1}<\ldots<i_{n}\leq m}\left(\det(A_{i_{1},\ldots, i_{n}})\right)^{2}\right)^{\frac 12},& m\geq n,\\ 
       0,& m<n,\\  
       \end{cases}
    $$
where $A_{i_{1},\ldots, i_{n}}$ is the  $n\times n$ matrix $\mat{c}{A^Te_{i_{1}},  \cdots, A^Te_{i_{n}}}^T$.

\end{definition}

\begin{remark}\label{remark3}
There seems to be another natural way to define the $\Gdet$ when $m<n$, that is $\Gdet(A):=\Gdet(A^T)$. But if so, many good properties and applications will be lost such as the multiplication formula (Theorem \ref{multiplication}), the generalized Cramer's rule (Theorem \ref{Cramer}) and so on. Even Theorem \ref{coincide} below will fail. 
\end{remark}

  The definition $\ref{Gdet}$ is compatible with the usual one as shown in the next theorem.
  
  \begin{theorem}\label{square}
  Let $A\in \R^{n\times n}$. Then $\Gdet(A)=\det(A)$.
  \end{theorem}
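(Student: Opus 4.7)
The plan is to simply unpack the definition of $\Gdet$ in the special case $m = n$ and observe that everything collapses to the ordinary determinant.

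First I would note that when $m = n$, the only tuple $(i_1, \ldots, i_n)$ with $1 \le i_1 < i_2 < \cdots < i_n \le m$ is $(1, 2, \ldots, n)$. Consequently the sum defining $\Gdet(A)$ has a single term, namely $(\det A_{1,2,\ldots,n})^2 = (\det A)^2$, since $A_{1,2,\ldots,n} = A$ itself. Taking the square root gives $|\det A|$.

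Next I would pick up the sign: by part (1) of Theorem \ref{signlemma}, $\sign(A) = \sign(\det(A))$ in the square case. Combining,
\begin{equation*}
\Gdet(A) = \sign(A) \cdot \bigl((\det A)^2\bigr)^{1/2} = \sign(\det(A)) \cdot |\det(A)| = \det(A),
\end{equation*}
which is the desired identity. There is no real obstacle here; the statement is essentially a sanity check that the new definition specializes correctly, and the only substantive input is Theorem \ref{signlemma}(1), which was already established.
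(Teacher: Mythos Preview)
Your proof is correct and follows essentially the same route as the paper: both observe that for $m=n$ the only admissible index tuple is $(1,2,\ldots,n)$ so the sum collapses to $(\det A)^2$, and then both invoke Theorem~\ref{signlemma}(1) to recover the sign.
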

  
  \begin{proof}
  For $A\in\R^{n\times n}$, the matrix $A_{i_{1}\ldots i_{n}}$ in the right hand side of Definition \ref{Gdet} is indeed $A$ itself, hence
$$
|\Gdet(A)|=
       \left(\sum\limits_{1\leq i_{1}<\ldots<i_{n}\leq n}\left(\det(A_{i_{1},\ldots, i_{n}})\right)^{2}\right)^{\frac 12}=|\det(A)|.
$$
          
By Theorem \ref{signlemma} (1), $\sign(A)=\sign(det(A))$.

Hence  $\Gdet(A)=\det(A)$
  \end{proof}

  To calculate a $\Gdet$ through the definition, one should calculate the determinants $\dbinom{m}{n}$ times. The following theorem cuts the process into only once. 
  \begin{theorem}\label{coincide}
  Let $A\in\R^{m\times n}$. Then $|\Gdet (A)|=(\det(A^T A))^{\frac 12}$.
    
\end{theorem}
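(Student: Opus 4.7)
The plan is to reduce the identity to the classical Cauchy--Binet formula applied to the product $A^{T}A$. Writing $B = A^{T}$, Cauchy--Binet says that for $m \ge n$,
\[
\det(A^{T}A) \;=\; \sum_{1\le i_{1}<\cdots<i_{n}\le m}\det\bigl(B_{\cdot,\{i_{1},\ldots,i_{n}\}}\bigr)\,\det\bigl(A_{\{i_{1},\ldots,i_{n}\},\cdot}\bigr),
\]
where the first factor picks the $n$ columns $i_{1},\ldots,i_{n}$ of $B=A^{T}$ (equivalently, the corresponding rows of $A$ transposed) and the second picks the $n$ rows $i_{1},\ldots,i_{n}$ of $A$. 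Since the two factors are determinants of mutually transposed matrices, they are equal, so each term of the sum is precisely $\bigl(\det(A_{i_{1},\ldots,i_{n}})\bigr)^{2}$, with $A_{i_{1},\ldots,i_{n}}$ as in Definition \ref{Gdet}.

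Therefore, in the case $m\ge n$,
\[
\det(A^{T}A) \;=\; \sum_{1\le i_{1}<\cdots<i_{n}\le m}\bigl(\det(A_{i_{1},\ldots,i_{n}})\bigr)^{2},
\]
which is non-negative, and its square root is exactly the unsigned part of $\Gdet(A)$ in Definition \ref{Gdet}. Since $\sign(A)\in\{-1,0,1\}$, taking absolute values yields $|\Gdet(A)| = (\det(A^{T}A))^{1/2}$.

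For $m<n$, the matrix $A^{T}A\in\R^{n\times n}$ has rank at most $\mathrm{rank}(A)\le m<n$, so $\det(A^{T}A)=0$; on the other hand $\Gdet(A)=0$ by Definition \ref{Gdet}, so the identity $|\Gdet(A)|=(\det(A^{T}A))^{1/2}$ holds trivially. There is no real obstacle here: the whole proof is the observation that the sum of squared minors in Definition \ref{Gdet} is literally the Cauchy--Binet expansion of $\det(A^{T}A)$; the only thing to be slightly careful about is that the two factors appearing in Cauchy--Binet must be identified as transposes of each other so that each term is a square, which is immediate from $B=A^{T}$.
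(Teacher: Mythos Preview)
Your proof is correct and follows essentially the same route as the paper: both invoke the Cauchy--Binet formula for $\det(A^{T}A)$ to identify the sum of squared $n\times n$ minors, and both dispose of the case $m<n$ by noting that each side vanishes. Your write-up is in fact slightly more explicit than the paper's, spelling out why the two Cauchy--Binet factors coincide (they are mutual transposes) and giving the rank reason for $\det(A^{T}A)=0$ when $m<n$; the only point you might state explicitly for completeness is that $\sign(A)=0$ forces every $n\times n$ minor of $A$ to vanish (so the ``unsigned part'' is zero as well), but the paper omits this too.
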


\begin{proof}
If $m<n$, then $|\Gdet (A)|=0=(\det(A^T A))^{\frac 12}$. 

If $m\ge n$, by Cauchy-Binet's formula,
$$\det(A^TA)=\sum\limits_{1\leq i_{1}<\ldots<i_{n}\leq m}\left(\det(A_{i_{1},\ldots, i_{n}})\right)^{2}.$$
Then $$\left(\det(A^TA)\right)^{\frac 12}=\left(\sum\limits_{1\leq i_{1}<\ldots<i_{n}\leq m}\left(\det(A_{i_{1},\ldots, i_{n}})\right)^{2}\right)^{\frac 12}=|\Gdet (A)|.$$
.
\end{proof}
  In the reference \cite{YTI}, they use the formula in Theorem \ref{coincide} as their definition of nonsquare matrices. 

  If the definition of the case $m<n$ is $\Gdet(A):=\Gdet(A^T)$ as in Remark \ref{remark3}, then Theorem \ref{coincide} will fail. For example, let $A=(1,0)$, then
  $$
   |\Gdet (A)|=1\ne 0=(\det(A^T A))^{\frac 12}.
  $$


From Theorem \ref{coincide}, the generalized determinant of a matrix is closely related to its singular values.

\begin{corollary}
Let $m\ge n, A\in\R^{m\times n}$. Let $s_i \ (1 \le i \le n)$ be the singular values of $A$. Then $\left|\Gdet(A)\right|=\displaystyle\prod_{i=1}^ns_i$.
\end{corollary}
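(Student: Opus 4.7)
The plan is to reduce the claim directly to Theorem \ref{coincide} and then invoke the standard relationship between the singular values of $A$ and the eigenvalues of $A^TA$. Since the hypothesis $m\ge n$ is exactly the regime in which Theorem \ref{coincide} gives the clean formula $|\Gdet(A)|=(\det(A^TA))^{1/2}$, the bulk of the work has already been done.

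First, I would recall the definition of the singular values: for $A\in\R^{m\times n}$ with $m\ge n$, the quantities $s_1,\ldots,s_n$ are the nonnegative square roots of the eigenvalues of the symmetric positive semidefinite matrix $A^TA\in\R^{n\times n}$. Since $A^TA$ is symmetric, it is orthogonally diagonalizable, and its eigenvalues (counted with multiplicity) are exactly $s_1^2,\ldots,s_n^2$. Consequently,
\[
\det(A^TA)=\prod_{i=1}^n s_i^2.
\]

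Second, I would combine this identity with Theorem \ref{coincide}, which asserts $|\Gdet(A)|=(\det(A^TA))^{1/2}$. Substituting and taking the (nonnegative) square root, permissible because each $s_i\ge 0$, yields
\[
|\Gdet(A)|=\bigl(\det(A^TA)\bigr)^{1/2}=\Bigl(\prod_{i=1}^n s_i^2\Bigr)^{1/2}=\prod_{i=1}^n s_i,
\]
which is the desired identity.

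There is essentially no obstacle here: the statement is a clean two-step corollary. The only point requiring a moment of care is making sure the connection ``singular values of $A$ equal square roots of eigenvalues of $A^TA$'' is invoked in the correct direction for rectangular $A$ with $m\ge n$, where $A^TA$ has the full set of $n$ eigenvalues matching the $n$ singular values of $A$; the $m<n$ case is excluded by hypothesis, so no degenerate situation arises.
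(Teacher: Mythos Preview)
Your proof is correct and follows exactly the approach the paper intends: the corollary is stated immediately after Theorem \ref{coincide} with the remark ``From Theorem \ref{coincide}, the generalized determinant of a matrix is closely related to its singular values,'' and no separate proof is given. Your two-step argument (apply $|\Gdet(A)|=(\det(A^TA))^{1/2}$, then use $\det(A^TA)=\prod_i s_i^2$) is precisely the intended justification.
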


   Besides, $\Gdet$ also keeps many other important properties of the usual determinant. 

\begin{theorem}\label{multialf} $\Gdet$ is the alternating multilinear function over $(\R^m)^n$ taking value $1$ at $(e_1, \cdots, e_n)\in(\R^m)^n$. In other words,  $\Gdet$ is the function from  $(\R^m)^n$ to $\R$ satisfying the following conditions ( $k\in\R, \al_i\in\R^m, 1\le i\le n$):

\noindent$(1)$ $\Gdet (\al_1,\cdots, \al_n)=\Gdet (\al_1, \cdots,a_{i-1},a_i+k a_j,a_{i+1},\cdots,\al_n).$

\noindent$(2)$ $\Gdet (\al_1, \cdots,ka_{i},\cdots,\al_n)=k\Gdet (\al_1,\cdots, \al_n).$

\noindent$(3)$ $\Gdet (\al_1, \cdots, \al_{i}, \al_{i+1}, \cdots, \al_n)=-\Gdet (\al_1, \cdots, \al_{i+1}, \al_{i}, \cdots, \al_n).$

\noindent$(4)$ $\Gdet(e_1, \cdots, e_n)=1$.

    \end{theorem}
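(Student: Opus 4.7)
The plan is to exploit the factorization $\Gdet(A) = \sign(A)\cdot|\Gdet(A)|$, treating the sign and magnitude independently. By Theorem \ref{coincide}, $|\Gdet(A)| = \sqrt{\det(A^T A)}$. Each column operation appearing in $(1)$--$(3)$ can be implemented as right-multiplication of $A$ by an elementary matrix $E \in \R^{n\times n}$, so I would combine the identity $\det((AE)^T(AE)) = \det(E)^2\det(A^T A)$ for the magnitude with Theorem \ref{signlemma} for the sign.

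For $(1)$, the shear $\alpha_i \mapsto \alpha_i + k\alpha_j$ corresponds to an elementary $E$ with $\det(E)=1$, so the magnitude is unchanged; Theorem \ref{signlemma}(4) then delivers the sign invariance. For $(2)$, scaling the $i$-th column by $k$ corresponds to a diagonal $E$ with $\det(E)=k$, so the magnitude is multiplied by $|k|$, while Theorem \ref{signlemma}(5) multiplies the sign by $\sign(k)$; their product gives exactly the factor $k$. For $(3)$, I would derive the alternating property as a formal consequence of $(1)$ and $(2)$ via the standard reduction
$$(\alpha_i, \alpha_{i+1}) \to (\alpha_i+\alpha_{i+1}, \alpha_{i+1}) \to (\alpha_i+\alpha_{i+1}, -\alpha_i) \to (\alpha_{i+1}, -\alpha_i) \to (\alpha_{i+1}, \alpha_i),$$
in which a single factor $-1$ is produced by the scaling step, rather than re-arguing via Theorem \ref{signlemma}(3). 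For $(4)$, the identity $I_{m\times n}^T I_{m\times n} = I_n$ together with Theorem \ref{signlemma}(2) immediately give $\Gdet(e_1,\ldots,e_n) = 1$.

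There is no genuine obstacle here: the infrastructure of Theorem \ref{signlemma} and Theorem \ref{coincide} has already isolated exactly the ingredients needed, and the argument reduces to routine bookkeeping once one records the correct elementary matrix $E$ for each operation. The one place where care is warranted is checking that Theorem \ref{signlemma}(3) really does cover right-multiplication of the $m\times n$ matrix $A$ by an $n\times n$ elementary matrix when $m\ge n$, which its hypotheses permit; and, implicitly, that the statement is only meaningful for $m\ge n$, since $(e_1,\ldots,e_n)\in(\R^m)^n$ already presupposes this.
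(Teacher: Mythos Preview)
Your proposal is correct and follows essentially the same architecture as the paper: factor $\Gdet$ into sign times magnitude, handle the sign via Theorem~\ref{signlemma}(4),(5),(2), and derive (3) from (1) and (2) by the standard four-step reduction. The only cosmetic difference is that you compute the magnitude through Theorem~\ref{coincide} as $\sqrt{\det(A^TA)}$ and the identity $\det((AE)^T(AE))=\det(E)^2\det(A^TA)$, whereas the paper works directly from Definition~\ref{Gdet}, observing that each $n\times n$ minor $\det(B_{i_1,\ldots,i_n})$ transforms exactly as the corresponding $\det(A_{i_1,\ldots,i_n})$ under the column operation; these are equivalent one-line computations.
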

\begin{proof}
$(1)$ Denote $(\al_1,\cdots, \al_n)$ by $A$, $(\al_1, \cdots,\al_{i-1},\al_i+k \al_j,\al_{i+1},\cdots,\al_n)$ by $B$. From the Definition \ref{Gdet} 
$$
\begin{aligned}
|\Gdet (B)|&= \left(\sum\limits_{1\leq i_{1}<\ldots<i_{n}\leq m}(\det(B_{i_{1},\ldots, i_{n}}))^{2}\right)^{\frac 12}\\
&=\left(\sum\limits_{1\leq i_{1}<\ldots<i_{n}\leq m}\left(\det(A_{i_{1},\ldots, i_{n}})\right)^{2}\right)^{\frac 12}\\
&=|\Gdet (A)|.
\end{aligned}
$$ 

By Theorem \ref{signlemma} (4), $\sign(B)=\sign(A)$.

So $\Gdet (B)=\Gdet (A).$

$(2)$ Again denote $(\al_1,\cdots, \al_n)$ by $A$, denote $(\al_1,\cdots,k \al_i,\cdots,\al_n)$ by $B$. From the Definition \ref{Gdet} 
$$
\begin{aligned}
|\Gdet (B)|&= \left(\sum\limits_{1\leq i_{1}<\ldots<i_{n}\leq m}(\det(B_{i_{1},\ldots, i_{n}}))^{2}\right)^{\frac 12}\\
&=\left(\sum\limits_{1\leq i_{1}<\ldots<i_{n}\leq m} k^2\left(\det(A_{i_{1},\ldots, i_{n}})\right)^{2}\right)^{\frac 12}\\
&=\left|k\right|\left(\sum\limits_{1\leq i_{1}<\ldots<i_{n}\leq m} \left(\det(A_{i_{1},\ldots, i_{n}})\right)^{2}\right)^{\frac 12}\\
&=|k\Gdet (A)|.
\end{aligned}
$$

By Theorem \ref{signlemma} (5), $\sign (B)=\sign(k)\sign(A)$

So $\Gdet (B)=k\Gdet (A).$

$(3)$ By $(1)$ and $(2)$, 
$$
\begin{aligned}
&Gdet (\al_1, \cdots, \al_{i}, \al_{i+1}, \cdots, \al_n)\\
=&\Gdet (\al_1, \cdots, \al_{i}+\al_{i+1}, \al_{i+1}, \cdots, \al_n)\\
=&\Gdet (\al_1, \cdots, \al_{i}+\al_{i+1}, -\al_{i}, \cdots, \al_n)\\
=&\Gdet (\al_1, \cdots, \al_{i+1}, -\al_{i}, \cdots, \al_n)\\
=&-\Gdet (\al_1, \cdots, \al_{i+1}, \al_{i}, \cdots, \al_n).
\end{aligned}
$$

$(4)$ Let $A=(e_1, \cdots, e_n)$, then
$$
	\left|A_{i_{1},\ldots, i_{n}}\right|=\begin{cases}
	1,&		\text{if\,\,}(i_{1},\cdots,i_{n})=(1,2,\cdots,n)\,\, \text{for\,\,all\,\,}  1\le i\le n,\\
	0,&		\text{else},\\
	\end{cases}
$$
where $A_{i_{1},\ldots, i_{n}}$ is the  $n\times n$ matrix $\mat{c}{A^Te_{i_{1}},  \cdots, A^Te_{i_{n}}}^T$.

Hence
$$    \Gdet(A)=
       \sign\left(A\right)\left(\sum\limits_{1\leq i_{1}<\ldots<i_{n}\leq m}\left(\det(A_{i_{1},\ldots, i_{n}})\right)^{2}\right)^{\frac 12}=1.    $$
\end{proof}

%
%
%

An immediate consequence of Theorem \ref{multialf} is the following corollary.

    \begin{corollary}\label{leftinverse} Let  $A\in\R^{m\times n}$. The followings are equivalent:

    $(1)$ $\Gdet(A)\not=0$.

    $(2)$ $A$ is of full column rank.

    $(3)$ $A$ has a left inverse, that is,  $\exists B\in\R^{n\times m}$ such that $BA = I_n$.
\end{corollary}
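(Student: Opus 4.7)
The plan is to dispose of $m<n$ as a trivial case and then prove $(1)\Leftrightarrow(2)\Leftrightarrow(3)$ for $m\ge n$. When $m<n$, $\Gdet(A)=0$ directly from Definition \ref{Gdet}, while $\mathrm{rank}(A)\le m<n$ rules out full column rank; and $BA=I_n$ would force $\mathrm{rank}(A)\ge n$, which is impossible. So all three statements fail simultaneously.

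Now suppose $m\ge n$. For $(2)\Rightarrow(1)$, if $A$ has full column rank then some $n\times n$ submatrix $A_{i_1,\ldots,i_n}$ is non-singular, hence the sum $\sum_{1\le i_1<\cdots<i_n\le m}\bigl(\det A_{i_1,\ldots,i_n}\bigr)^2$ is strictly positive and $\sign(A)=\pm 1$, giving $\Gdet(A)\ne 0$. For $(1)\Rightarrow(2)$, suppose for contradiction that the columns $\al_1,\ldots,\al_n$ are linearly dependent, so that some $\al_i=\sum_{j\ne i}c_j\al_j$. Expanding $\Gdet(\al_1,\ldots,\al_n)$ via the multilinearity of Theorem \ref{multialf}(1)--(2) produces a sum of generalized determinants each of which has two identical columns; the alternating property Theorem \ref{multialf}(3) forces each such term to vanish, so $\Gdet(A)=0$, contradicting (1).

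Finally $(2)\Leftrightarrow(3)$ is standard linear algebra. If $A$ has full column rank then $A^TA$ is invertible, since $A^TAx=0$ implies $x^TA^TAx=\|Ax\|^2=0$ and hence $x=0$; then $B=(A^TA)^{-1}A^T$ satisfies $BA=I_n$. Conversely, $BA=I_n$ gives $Ax=0\Rightarrow x=BAx=0$, so the null space of $A$ is trivial. No step presents a genuine obstacle; the only subtlety worth flagging is that $(2)\Rightarrow(1)$ relies on the non-degeneracy of $\Gdet$ (Theorem \ref{multialf}(4), reflected in the strict positivity of the squared-determinant sum) and not merely on alternation and multilinearity.
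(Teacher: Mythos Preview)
Your proof is correct and follows the same route as the paper, which simply says that $(1)\Leftrightarrow(2)$ is clear from Theorem \ref{multialf} and $(2)\Leftrightarrow(3)$ is a standard linear-algebra fact; you have merely filled in the details the paper omits, including the explicit treatment of the case $m<n$ and the left inverse $(A^TA)^{-1}A^T$.

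One small phrasing point: in your $(1)\Rightarrow(2)$ argument you speak of ``expanding via the multilinearity of Theorem \ref{multialf}(1)--(2)'', but note that those two items do not state additivity in each column; (1) only permits adding a multiple of \emph{another column}. For the situation at hand this is exactly what you need: if $\al_i=\sum_{j\ne i}c_j\al_j$, apply (1) repeatedly to subtract each $c_j\al_j$ from the $i$-th column, reducing it to the zero vector, and then (2) with $k=0$ gives $\Gdet(A)=0$. So the argument goes through without invoking full additive multilinearity.
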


 \begin{proof} The equivalence of (1) and (2) is clear from Theorem \ref{multialf}, and the equivalence of (2) and (3) is an easy exercise of Linear Algebra.
\end{proof}

Another remarkable advantage of Definition \ref{Gdet} is that it does satisfy the multiplication formula, as stated in the following

    \begin{theorem}\label{multiplication} Let  $A\in\R^{m\times n}, B\in\R^{n\times n}$. Then $\Gdet(AB)=\Gdet(A)\Gdet(B)$.

\end{theorem}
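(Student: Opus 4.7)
The plan is to separate magnitude and sign of $\Gdet(AB)$ and match each to the corresponding product. The magnitude will come from Theorem \ref{coincide} (the $\sqrt{\det(A^TA)}$ formula), and the sign will come from Theorem \ref{signlemma}(3) together with Theorem \ref{square}, which identifies $\det(B)$ with $\Gdet(B)$ in the square case.

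First I would dispose of the degenerate cases. If $m<n$, then $AB\in\R^{m\times n}$, so $\Gdet(AB)=0$ and $\Gdet(A)=0$ by Definition \ref{Gdet}, and the equality is trivial. If $m\ge n$ but either $A$ fails to have full column rank or $B$ is singular, then $AB$ fails to have full column rank: in the first case because $\mathrm{rank}(AB)\le\mathrm{rank}(A)<n$, and in the second because any $v\ne 0$ with $Bv=0$ satisfies $ABv=0$. Then Corollary \ref{leftinverse} and Theorem \ref{square} give $\Gdet(AB)=0$ and $\Gdet(A)\Gdet(B)=0$ respectively, so both sides vanish.

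The main case is $m\ge n$ with both $A$ and $B$ of full column rank (equivalently, $B$ nonsingular). For the magnitude, I would apply Theorem \ref{coincide} to $AB$ and then use the multiplicativity of the ordinary determinant on square matrices:
\[
|\Gdet(AB)|^2=\det\!\bigl((AB)^T(AB)\bigr)=\det(B^T)\,\det(A^TA)\,\det(B)=\det(B)^2\,|\Gdet(A)|^2.
\]
Taking square roots and using Theorem \ref{square} for the square matrix $B$, this yields $|\Gdet(AB)|=|\Gdet(A)|\,|\Gdet(B)|$.

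For the sign, Theorem \ref{signlemma}(3) gives $\sign(AB)=\sign(A)\sign(B)$ directly. Combining the two pieces,
\[
\Gdet(AB)=\sign(AB)\,|\Gdet(AB)|=\sign(A)\sign(B)\,|\Gdet(A)|\,|\Gdet(B)|=\Gdet(A)\Gdet(B).
\]
There is essentially no obstacle: the work is in the earlier results. The only thing to be careful about is the bookkeeping of the degenerate cases, in particular noting that $\Gdet(B)=\det(B)$ for the square matrix $B$ so that $B$ singular really does force the right-hand side to vanish.
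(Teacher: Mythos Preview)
Your proof is correct and follows essentially the same approach as the paper: magnitude via Theorem~\ref{coincide} and the multiplicativity of $\det$ on square matrices, sign via Theorem~\ref{signlemma}(3), then combine. The separate degenerate-case analysis for $A$ not of full column rank or $B$ singular is harmless but unnecessary, since the identity $|\Gdet(AB)|^2=\det(B^T)\det(A^TA)\det(B)$ and Theorem~\ref{signlemma}(3) already hold without any rank hypothesis.
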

    \begin{proof}  If $m<n$, then $\Gdet(AB)=0=\Gdet(A)\Gdet(B)$. 
    
    If $m\ge n$, by Theorem \ref{square} and \ref{coincide},

   $$\begin{aligned}
    \left|\Gdet(AB) \right|
    &=\sqrt{\det(B^{T}A^{T}AB)}&\\
    &=\sqrt{\det(B^T)\cdot \det(A^{T}A)\cdot \det(B)}&\\
    &=\left|\Gdet(A)\cdot \det(B)\right|\\
    &=|\Gdet(A)||\Gdet(B)|.
    \end{aligned}
    $$

By Theorem \ref{signlemma} (3), $\sign(AB)=\sign(A)\sign(B)$.

Hence $\Gdet(AB)=\Gdet(A)\Gdet(B)$.
  \end{proof}
  
  If the definition of the case $m<n$ is $\Gdet(A):=\Gdet(A^T)$ as in Remark \ref{remark3}, then Theorem \ref{multiplication} will fail. For example, let $A=(1,0)$, $B=\begin{pmatrix}
  1 & 0\\
  0 & 2\\
  \end{pmatrix}$, then
  $$
  \Gdet (AB)=1\ne 2=\Gdet(A)\Gdet(B).
  $$
\begin{remark}
One may hope the dual version of Theorem \ref{multiplication} also holds: Let  $A\in\R^{m\times n}, B\in\R^{m\times m}$. Then $\Gdet(BA)  =\Gdet(B)\Gdet(A)$. Unfortunately, this is not true. For instance,
$$\Gdet\left[\mat{cc}{1&0\\ 0&2}\mat{c}{1\\ 0}\right]=\Gdet\mat{c}{1\\ 0}=1,$$
while
$$\Gdet\mat{cc}{1&0\\ 0&2}\Gdet\mat{c}{1\\ 0}=2.$$
\end{remark}

The Definition \ref{Gdet} also generalizes the Cauchy-Binet's formula in the case $A^TA$.
\begin{theorem}\label{C-B}
 Let $m\ge n$, $A\in \R^{m\times n}$. Then for all $n\leq k\leq m$, 
    $$
	\dbinom{k-n}{m-n}\det(A^TA)=\sum_{1\leq i_1<\ldots <i_k\leq m}{(\Gdet(A_{i_1,i_2,\ldots ,i_k}))^2},
	$$
where $A_{i_1,i_2,\ldots ,i_k}$ is the $k\times n$ matrix $\mat{c}{A^Te_{i_{1}},  \cdots, A^Te_{i_{k}}}^T$. 
\end{theorem}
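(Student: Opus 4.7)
The plan is to reduce the identity to the classical Cauchy-Binet formula plus a standard double-counting argument, via Theorem \ref{coincide}.

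First I would rewrite each summand on the right-hand side as a determinant of a Gram matrix. Since $A_{i_1,\ldots,i_k}$ is a $k\times n$ matrix with $k\ge n$, Theorem \ref{coincide} gives
$$(\Gdet(A_{i_1,\ldots,i_k}))^{2}=\det\bigl(A_{i_1,\ldots,i_k}^{T}A_{i_1,\ldots,i_k}\bigr).$$
Then I would apply the classical Cauchy-Binet formula to this $k\times n$ matrix to expand each such determinant as a sum of squares of $n\times n$ minors:
$$\det\bigl(A_{i_1,\ldots,i_k}^{T}A_{i_1,\ldots,i_k}\bigr)=\sum_{1\le j_1<\cdots<j_n\le k}\bigl(\det(A_{i_{j_1},\ldots,i_{j_n}})\bigr)^{2},$$
where I am using that picking rows $j_1<\cdots<j_n$ of $A_{i_1,\ldots,i_k}$ is the same as picking rows $i_{j_1}<\cdots<i_{j_n}$ of the original $A$.

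Next I would swap the order of summation. For a fixed $n$-subset $\{l_1<\cdots<l_n\}\subseteq\{1,\ldots,m\}$, the pairs $(\{i_1,\ldots,i_k\},\{j_1,\ldots,j_n\})$ with $\{i_{j_1},\ldots,i_{j_n}\}=\{l_1,\ldots,l_n\}$ are in bijection with the $k$-subsets of $\{1,\ldots,m\}$ that contain $\{l_1,\ldots,l_n\}$ (the $j$'s being determined once the $k$-subset is chosen). Hence each $n$-minor $\det(A_{l_1,\ldots,l_n})^{2}$ appears exactly $\binom{m-n}{k-n}$ times in the total sum, and I get
$$\sum_{1\le i_1<\cdots<i_k\le m}(\Gdet(A_{i_1,\ldots,i_k}))^{2}=\binom{m-n}{k-n}\sum_{1\le l_1<\cdots<l_n\le m}\bigl(\det(A_{l_1,\ldots,l_n})\bigr)^{2}.$$
A final application of Cauchy-Binet (or equivalently Theorem \ref{coincide} again) identifies the inner sum with $\det(A^{T}A)$, completing the proof. (Note that the binomial coefficient in the stated formula appears to have its arguments reversed; with $n\le k\le m$ the correct coefficient is $\binom{m-n}{k-n}$.)

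There is no real obstacle here: the only thing one must be careful about is the re-indexing in the second step, making sure that the $n$-minor of the sub-submatrix $A_{i_1,\ldots,i_k}$ obtained by selecting rows $j_1,\ldots,j_n$ genuinely equals the $n$-minor of $A$ obtained by selecting rows $i_{j_1},\ldots,i_{j_n}$, so that the subsequent double-count is valid.
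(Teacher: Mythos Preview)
Your proof is correct and follows essentially the same route as the paper: expand each $(\Gdet(A_{i_1,\ldots,i_k}))^2$ via Theorem~\ref{coincide}/Cauchy--Binet into a sum of squares of $n\times n$ minors, swap the order of summation so that each minor $\det(A_{l_1,\ldots,l_n})^2$ is counted $\binom{m-n}{k-n}$ times, and finish with Cauchy--Binet again. Your remark about the binomial coefficient is also apt; the paper writes $\dbinom{k-n}{m-n}$ throughout, which in standard conventions should be read as $\binom{m-n}{k-n}$.
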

\begin{proof}
If $k=n$, the theorem is part of Definition \ref{Gdet}. Suppose $n<k\leq m$, then
$$
\begin{aligned}
& \sum_{1\leq i_1<\ldots <i_k\leq m}{(\Gdet(A_{i_1,i_2,\ldots ,i_k}))^2}\\
=&\sum_{1\leq i_1<\ldots <i_k\leq m}{\sum_{1\leq j_1<\ldots <j_n\leq k}{(\Gdet(A_{i_{j_1},\ldots ,i_{j_n}}))^2}}\\
=&\sum_{1\leq i_1<\ldots <i_n\leq m}{\sum_{\{j_1,\ldots,j_{k-n}\}\ne \{i_1,\ldots,i_n\}}{(\Gdet(A_{i_1,\ldots ,i_n}))^2}}\\
=&\dbinom{k-n}{m-n}\sum_{1\leq i_1<\ldots <i_n\leq m}{(\Gdet(A_{i_1,\ldots ,i_n}))^2}\\
=&\dbinom{k-n}{m-n}\det(A^TA).
\end{aligned}
$$
\end{proof}

\begin{remark}
 From Theorem \ref{square}, Theorem \ref{multialf}, Theorem \ref{multiplication} and Theorem \ref{C-B}, we know that the generalized determinant given by Definition \ref{Gdet} keeps the most important properties (compatible with the usual determinant of square matrices, being an alternating multilinear function, keeping  multiplication formula and partly keeping the Cauchy-Binet's formula) of the usual determinant. 
\end{remark}

\section{Applications}

In this section, three more important applications of the usual determinant are generalized to rectangular matrices.


\subsection{The Generalized Cramer's Rule}

One of the most interesting applications of Definition \ref{Gdet} is that Cramer's Rule holds as usual. 

\begin{theorem}\label{Cramer}  Let $m\ge n, A\in\R^{m\times n}, b\in R(A)$, the  column space of $A$. Then the system of linear equations $Ax = b$ has exactly one solution if and only if $\Gdet(A)\not=0$. In this case, the solution $x=(x_i)_{i=1}^n$ can be represented by the generalized Cramer's Rule:
	$$
	x_i=\frac{\Gdet(A_i)}{\Gdet(A)},\ 1\le i\le n,
	$$
	where $A_i=(Ae_1, \cdots, Ae_{i-1}, b, Ae_{i+1}, \cdots, Ae_n)$.
\end{theorem}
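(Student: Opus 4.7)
The plan is to separate the theorem into two independent claims: (i) the equivalence between unique solvability of $Ax=b$ and non-vanishing of $\Gdet(A)$, and (ii) the explicit Cramer-type formula for the unique solution.

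Claim (i) is essentially a restatement of Corollary \ref{leftinverse}. Since the hypothesis $b\in R(A)$ already guarantees at least one solution to $Ax=b$, uniqueness is equivalent to $\ker A = \{0\}$, i.e.\ to $A$ having full column rank. Corollary \ref{leftinverse} then identifies this last condition with $\Gdet(A)\ne 0$, closing the equivalence.

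For claim (ii), I would assume $\Gdet(A)\ne 0$ and let $x=(x_i)_{i=1}^n$ denote the unique solution, so that $b=\sum_{j=1}^n x_j\, Ae_j$. Substituting this decomposition of $b$ into the $i$-th column of $A_i$ and using the multilinearity of $\Gdet$ from Theorem \ref{multialf}(1)--(2) gives
\[
\Gdet(A_i) \;=\; \sum_{j=1}^{n} x_j\, \Gdet\bigl(Ae_1,\ldots,Ae_{i-1},Ae_j,Ae_{i+1},\ldots,Ae_n\bigr).
\]
For each $j\ne i$ the summand is the generalized determinant of a matrix in which $Ae_j$ appears in both column $i$ and column $j$, and the alternating property forces it to vanish. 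Only the $j=i$ term survives, yielding $\Gdet(A_i) = x_i\, \Gdet(A)$, and dividing through by the nonzero $\Gdet(A)$ produces the advertised formula.

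The only non-immediate step is the vanishing of $\Gdet$ on a matrix with two equal columns. Theorem \ref{multialf}(3) provides the sign change only for adjacent transpositions, but an arbitrary transposition is a product of an odd number of adjacent ones, so swapping any two columns still reverses the sign; combined with the equality of those two columns, this forces $\Gdet = -\Gdet = 0$. Once this small observation is recorded, every other ingredient is a direct application of Theorem \ref{multialf} and Corollary \ref{leftinverse}, so I do not anticipate further obstacles.
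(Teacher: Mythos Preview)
Your proof is correct and the equivalence in (i) is handled exactly as in the paper. For the formula in (ii), however, the paper takes a different route: it sets $B=(e_1,\ldots,e_{i-1},x,e_{i+1},\ldots,e_n)\in\R^{n\times n}$, observes that $AB=A_i$ and $\det(B)=x_i$, and then applies the multiplication formula (Theorem~\ref{multiplication}) to obtain $\Gdet(A_i)=\Gdet(A)\,x_i$ in one stroke. This packages sign and magnitude together via a single identity, whereas your approach works column-by-column through Theorem~\ref{multialf}. Both are short; the paper's version has the advantage of not needing any auxiliary remark about repeated columns.

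One small wrinkle in your write-up: Theorem~\ref{multialf}(1)--(2) do not literally state additivity in a column, so the displayed sum expansion is not immediately justified by what you cite. You do not actually need additivity here, though. Since each $Ae_j$ with $j\ne i$ is already the $j$-th column of $A_i$, property~(1) lets you subtract $x_j$ times column~$j$ from column~$i$ for every $j\ne i$, reducing column~$i$ to $x_iAe_i$; property~(2) then factors out $x_i$. This gives $\Gdet(A_i)=x_i\Gdet(A)$ directly and also spares you the separate argument about vanishing on matrices with equal columns.
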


\begin{proof} The first statement is true due to Corollary \ref{leftinverse}. To check the generalized Cramer's Rule, we make use of G. Strang's famous trick (cf.\cite{S}, Page 273-274). Set $B= (e_1, \cdots, e_{i-1}, x, e_{i+1}, \cdots, e_n)_{n\times n}$. Then
$$AB=(Ae_1, \cdots, Ae_{i-1}, b, Ae_{i+1}, \cdots, Ae_n)=A_i.$$
 
 It is easy to see that $\Gdet(B)=\det(B)=x_i$. Now, by the multiplication formula (Theorem \ref{multiplication}), we have $$\Gdet(A_i)=\Gdet(A)\Gdet(B) =\Gdet(A)x_i,$$ the generalized Cramer's Rule follows.

\end{proof}
Theorem \ref{Cramer} generalizes the Cramer's Rule in the most natural way in the following sense: If $\Gdet(A)=0$, then $Ax=b$ has no solution or infinitely many solutions, there is no generalized Cramer's Rule at all. It also shows the rationality of the case $m<n$ in Remark \ref{remark3}.
%
%
%

   \subsection{Subset presentation}

    From now on, we present two more applications of the generalized determinant given by Definition \ref{Gdet} in geometry. The first is the subset presentation in affine geometry. By abuse of language, we do not distinguish vectors and points in the affine space $\mathbb A^m$ and the vector space $\R^m$.

    \begin{theorem}\label{subspace} Let $V\subseteq\R^m$ be a proper linear subspace  with a basis $\alpha _1, \cdots, \alpha _n$. Then

    \[V=\{x\in\R^m | 
    \Gdet(\alpha _1, \cdots, \alpha _n, x)=0\}.
    \]

    \end{theorem}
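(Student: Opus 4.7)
The plan is to reduce the claim directly to Corollary \ref{leftinverse}, which identifies vanishing of $\Gdet$ with failure of full column rank. Setting $M(x) = (\alpha_1, \ldots, \alpha_n, x) \in \R^{m\times(n+1)}$, I will show that $\Gdet(M(x)) = 0$ if and only if $x \in V$, which is exactly the claimed set equality.

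Before invoking Corollary \ref{leftinverse}, I would first verify the dimension hypothesis so that the non-trivial branch of Definition \ref{Gdet} applies. Because $V$ is a \emph{proper} subspace of $\R^m$, its dimension satisfies $n < m$, hence $n+1 \le m$. Thus $M(x)$ is an $m\times (n+1)$ matrix with $m \ge n+1$, so $\Gdet(M(x))$ is computed by the square-root-of-sum-of-squared-minors formula (times a sign), not forced to $0$ by the $m < n$ clause.

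Now I would apply Corollary \ref{leftinverse} to $M(x)$ (with ``$n$'' in that corollary playing the role of $n+1$ here): $\Gdet(M(x)) \ne 0$ if and only if the columns $\alpha_1, \ldots, \alpha_n, x$ are linearly independent. Since $\alpha_1, \ldots, \alpha_n$ are by hypothesis a basis of $V$, they are already linearly independent, so adjoining $x$ yields a dependent system precisely when $x$ lies in their span, i.e.\ when $x \in V$. Taking the contrapositive gives $\Gdet(M(x)) = 0 \iff x \in V$, which is the desired equality.

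There is essentially no serious obstacle: everything reduces to Corollary \ref{leftinverse} combined with the elementary fact that extending a basis by one more vector is independent iff the new vector lies outside the span. The only subtlety worth flagging is the role of properness of $V$, which rules out the degenerate case $n = m$ in which $M(x)$ would be $m\times(m+1)$ and $\Gdet(M(x))$ would be identically $0$ by definition, making the claimed set equal to all of $\R^m$ rather than $V$.
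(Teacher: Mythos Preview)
Your proof is correct and follows the same route as the paper's: both reduce the claim to the fact that $\Gdet$ vanishes precisely when the columns are linearly dependent, the paper invoking Theorem~\ref{multialf} directly while you cite its immediate consequence Corollary~\ref{leftinverse}. Your closing aside is slightly off, however: if $n=m$ then $V=\R^m$, so both sides of the asserted equality would still coincide (each being all of $\R^m$); properness only keeps the statement from becoming vacuous rather than being logically required.
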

   
    \begin{proof} According to Theorem \ref{multialf}, $\Gdet(\alpha _1, \cdots, \alpha _n, x)=0$ if and only if the involved $n+1$ vectors  $\alpha _1, \cdots, \alpha _n, x$ are linearly dependent, thus $x$ is linearly dependent with $\alpha _1, \cdots, \alpha _n$, since theses $n$ vectors form a basis of $V$. Therefore $x\in V$, as required. 
     \end{proof}
   
    We now consider the general linear variety in the affine space $\R^m$.  By {\it linear variety} we mean a subset in $\R^m$ of the form $V+\al_0$, where $\al_0\in\R^m$ is a fixed vector and $V$ a linear subspace of $\R^m$. 

    The following  is an immediate consequence of Theorem \ref{subspace}.
   
    \begin{corollary}\label{linearvariety} Given $\al_0\in\R^m$. Let $V\subseteq\R^m$ be a proper linear subspace  with a basis $\alpha _1, \cdots, \alpha _n$.  Then
    
    \[V+\al_0=\{x\in\R^m |
    \Gdet(\alpha _1, \cdots, \alpha _n, x-\al_0)=0\}.
    \]
    \end{corollary}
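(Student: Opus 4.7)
The plan is to reduce the statement about the affine linear variety $V+\al_0$ to the statement about the linear subspace $V$ via translation, and then quote Theorem \ref{subspace} directly. This is really just a change-of-variables observation, so the proof should be a couple of lines.

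Concretely, I would proceed in two short steps. First, I would unfold the definition of the set $V+\al_0$: a point $x\in\R^m$ lies in $V+\al_0$ if and only if there exists $v\in V$ with $x=v+\al_0$, which is equivalent to $x-\al_0\in V$. Second, since $\alpha_1,\cdots,\alpha_n$ is a basis of $V$, Theorem \ref{subspace} applied to the test vector $x-\al_0$ gives that $x-\al_0\in V$ if and only if $\Gdet(\alpha_1,\cdots,\alpha_n,x-\al_0)=0$. Chaining these two equivalences produces exactly the set equality claimed in the corollary.

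I do not anticipate any real obstacle, since this is purely a translation-theoretic restatement of the previous theorem. The only subtlety worth a sanity check is dimensional: because $V$ is assumed to be a \emph{proper} subspace of $\R^m$, we have $m\ge n+1$, so the matrix $(\alpha_1,\cdots,\alpha_n,x-\al_0)\in\R^{m\times(n+1)}$ satisfies the hypothesis $m\ge n+1$ of Definition \ref{Gdet} and its generalized determinant is meaningful (and not identically zero by the $m<n$ convention). This properness hypothesis is inherited verbatim from Theorem \ref{subspace}, so no extra verification is needed and the corollary follows immediately.
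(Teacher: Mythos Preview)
Your proposal is correct and mirrors the paper's approach: the paper does not write out a separate proof but simply declares the corollary an immediate consequence of Theorem \ref{subspace}, which is exactly the translation reduction $x\in V+\al_0 \iff x-\al_0\in V$ that you describe. Your remark about the properness hypothesis guaranteeing $m\ge n+1$ is a nice sanity check, but no further argument is needed.
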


 
    \subsection{The Generalized Oriented Volume}
   
  It is well-known that the geometric meaning of the absolute value of the determinant of a matrix $A\in\R^{m\times m}$ is the volume of the parallel polyhedron generated by the $m$ column vectors of $A$, and the sign shows its orientation. However, the oriented volume of the parallel polyhedron generated by $n (< m)$ vectors in $\R^m$ does not make sense in the literature. In this subsection, we solve this problem by the sign system (Definition \ref{2d1-sign}) and Definition \ref{Gdet} of the generalized determinant for matrices.
  
  Now we give the exact definition of the {\it generalized volume} $GV(P)$ of an $n$-dimensional parallel polyhedron $P$ in $\R^{m}$. 
  
   Let $P\subseteq \R^{m}$ be the parallel polyhedron generated by $n$ linearly independent vectors $\alpha_1,\ldots,\alpha_n\in \R^{m}(m\ge n)$, that is $P=\{\sum\limits_{i=1}^n{x_i\alpha_i}|0\leq x_i\leq 1, i=1,\ldots,n\}$, $A=\left(\alpha_1,\ldots,\alpha_n\right)$.  Let $A=QR$ be the $QR$ decomposition. Define a linear map 
  $$
  \phi:\text{span}\{\alpha_1,\ldots,\alpha_n\}\rightarrow \R^{n}, \phi(q_i)=e_i,
  $$
  where $q_i$ is the $i$th column vector of $Q$. Note that $V(\phi(P))$, the volume of $\phi(P)(\subseteq \R^{n})$ is well-defined, and since $\phi$ is clearly an orthogonal transformation, we define $GV(P)$, the {\it generalized volume} of $P(\subseteq \R^{m})$, to be $V(\phi(P))$. In particular, if $m=n$, then $GV(P)=V(P)$.
  
    \begin{theorem}\label{volume1}
    Let $\alpha_1,\cdots ,\alpha_n\in\R^m $ be linearly independent. Denote by $P$ the parallel polyhedron generated by $\alpha _1,\cdots ,\alpha _n$.  Then
     $$GV(P) = |\Gdet (\alpha _1, \cdots, \alpha _n)|.$$
    \end{theorem}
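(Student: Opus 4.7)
The plan is to reduce $GV(P)$ to the volume of a parallelepiped in $\R^n$ generated by the columns of the upper-triangular factor in the $QR$ decomposition of $A=(\alpha_1,\ldots,\alpha_n)$, and then identify this quantity with $|\Gdet(A)|$ via Theorem \ref{coincide}.

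First I would fix the $QR$ decomposition $A=QR$ used in the definition of $GV(P)$, where $Q\in\R^{m\times n}$ has orthonormal columns $q_1,\ldots,q_n$ and $R\in\R^{n\times n}$ is upper triangular with positive diagonal. Writing the $i$-th column of $A=QR$ as $\alpha_i=\sum_{j=1}^n R_{ji}q_j$ and applying the linear map $\phi$ from the definition of $GV(P)$, I obtain $\phi(\alpha_i)=\sum_{j=1}^n R_{ji}e_j$, which is precisely the $i$-th column of $R$. By linearity, $\phi(P)$ is therefore the parallelepiped in $\R^n$ spanned by the columns of $R$, so its ordinary volume equals $|\det(R)|$; hence $GV(P)=V(\phi(P))=|\det(R)|$.

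Next, applying Theorem \ref{coincide} together with $A^TA=R^TQ^TQR=R^TR$, I obtain $|\Gdet(A)|=\sqrt{\det(A^TA)}=\sqrt{\det(R^TR)}=|\det(R)|$. Combining this with the previous step yields $GV(P)=|\Gdet(\alpha_1,\ldots,\alpha_n)|$, as required.

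The only subtlety worth flagging is independence from choices in the $QR$ decomposition: under the convention that $R$ has positive diagonal, the decomposition is unique because $A$ is of full column rank (the $\alpha_i$ being linearly independent), so $GV(P)$ is unambiguous. There is no serious obstacle in the argument: the essential point is simply that $\phi$ carries the generators $\alpha_i$ to the columns of $R$, after which the standard square-matrix volume interpretation combined with $\det(A^TA)=\det(R)^2$ does the rest.
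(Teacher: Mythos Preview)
Your proof is correct and follows essentially the same approach as the paper: both arguments take the $QR$ decomposition $A=QR$, use $\phi(\alpha_i)=r_i$ to identify $GV(P)=V(\phi(P))=\det(R)$, and then show $|\Gdet(A)|=\det(R)$. The only cosmetic difference is that the paper reaches the last identity via the multiplication formula (Theorem \ref{multiplication}) together with Theorem \ref{coincide} applied to $Q$, whereas you get there directly from Theorem \ref{coincide} via $A^TA=R^TR$; both are equally short.
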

    \begin{proof}
    Write $A=\left(\alpha_1,\ldots,\alpha_n\right)$. Let $A=QR$ be the $QR$ decomposition. Since $\phi(\alpha_i)=r_i$ is the $i$th column vector of $R$, $\phi(P)$ is the parallel polyhedron generated by $r_1,\ldots,r_n$. Hence $V(\phi(P))=\det(R)$. Then by Theorem \ref{coincide} and \ref{multiplication},
    $$
    |\Gdet(A)|=|\Gdet(Q)\Gdet(R)|=[\det(Q^TQ)]^{\frac{1}{2}}\det(R)=\det(R).
    $$
    
    Therefore, $GV(P)=V(\phi(P))=\det(R)=|\Gdet(A)|$.
    \end{proof}

\begin{remark}
From Corollary \ref{QR}, the sign of $A$ equals to the sign of $Q$, which shows the orientation of the column vectors of $Q$ orthogonally projecting on the $i_1i_2\ldots i_n$-subspace of $\R^{m}$, where $i_1,i_2,\ldots, i_n$ are the serial numbers of rows of the principle submatrix of $Q$, and the matrix of the orthogonal projection is $P=\sum\limits_{k=1}^{n}{e_k e_k^T}\in \R^{m\times m}$.
\end{remark}

 \begin{example}\label{length}
Let $\alpha\in\R^m$. Then $|\Gdet(\alpha)|$ is exactly the usual Euclidean length of $\alpha$. If $\al\ne0$, then the sign of $\Gdet(\alpha)$ is exactly the sign of first non-zero entry of $\al$. 
  \end{example} 
  
 \begin{example}\label{volume}
Let  $\alpha_1,\alpha_2$ be two vectors in $\R^3$, then $|\Gdet(\alpha_1,\alpha_2)|$ is the area of the parallelogram formed by $\alpha_1$ and $\alpha_2$ in $\R^3$. For instance, if
$\alpha_1=\mat{c}{3\\4\\2}$, $\alpha_2=\mat{c}{6\\8\\ 1}$,
then $\Gdet
\mat{cc}
{3&6\\ 
4&8\\
2&1}=-15$, namely $GV(P)=15$.

The negative sign of $-15$ shows the orientation of the projection of $\alpha_1,\alpha_2$ on the $xz$-plane.
\end{example} 

\newpage

	\end{document}